\newtheorem{thm}{Theorem}
\newtheorem{lem}{Lemma}
\newtheorem{prop}{Proposition}
\theoremstyle{definition}
\newtheorem{defn}{Definition}
\theoremstyle{remark}
\newtheorem{rem}{Remark}
\numberwithin{equation}{section}
\newcommand{\dom}{\mathop{\rm dom}}
\renewcommand{\Im}{\mathop{\rm Im}}
\renewcommand{\kappa}{\varkappa}
\newcommand{\Real}{\mathbb R}
\newcommand{\eps}{\varepsilon}
\newcommand{\cI}{\mathcal{I}}
\newcommand{\ra}{\rangle}
\newcommand{\la}{\langle}
\newcommand{\rd}[1]{{\color{red}{#1}}}
\renewcommand{\emph}[1]{{\textit{#1}}}
\renewcommand{\phi}{\varphi}
\newcommand\xe{\left(\tfrac x\eps\right)}
\newcommand\se{\left(\tfrac s\eps\right)}
\begin{document}

\title[Two-parametric $\delta'$-interactions]{
Two-parametric $\delta'$-interactions:\\ approximation  by Schr\"{o}dinger operators with localized rank-two perturbations
}%
\author{Yuriy Golovaty}%
\address{Department of Mechanics and Mathematics,
  Ivan Franko National University of Lviv\\
  1 Universytetska str., 79000 Lviv, Ukraine}%
\email{yuriy.golovaty@lnu.edu.ua}%

\subjclass[2000]{Primary 34L40, 81Q15; Secondary  81Q10}

\keywords{1D Schr\"{o}dinger operator, point interaction, $\delta'$-interaction, $\delta'$-potential, solvable model, finite rank perturbation}

% ----------------------------------------------------------------
\begin{abstract}
  We construct  a norm resolvent approximation to the family of point interactions $f(+0)=\alpha f(-0)+\beta f'(-0)$, $f'(+0)=\alpha^{-1}f'(-0)$
by  Schr\"{o}\-din\-ger operators  with
loca\-li\-zed rank-two perturbations coupled with short range potentials. In particular, a new approximation to the  $\delta'$-inter\-actions is obtained.
\end{abstract}
\maketitle
\vskip30pt
\section{Introduction}

Schr\"{o}dinger operators with pseudo-potentials  that are distributions supported on  discrete sets (such potentials are usually termed point interactions) have received  considerable attention from many researchers  over several past decades. The point interactions have been widely and extensively investigated from various points of view and the study of solvable models based on the concept of  zero range quantum interactions has a long and interesting history. General references for this fascinating area are \cite{Albeverio2edition, AlbeverioKurasov}.
Historically the point interactions were introduced in quantum mechanics as  li\-mits of families of squeezed potentials. The main purpose was to find  solvable mo\-dels describing with admissible fidelity the real quantum processes  go\-ver\-ned by  Hamiltonians with localized potentials.  However the connection between real short-range interactions and point interactions is very complex and ambiguously determined.
This is  certainly the reason why the ``inverse'' problem -- how to approximate a given point interaction by regular Hamiltonians with localized perturbations -- is also important.

In the one-dimensional case, among all zero range  interactions, the $\delta'$-interactions, along with $\delta$ potentials, are most studied in this kind of research.   The $\delta'$-interaction at the ori\-gin, of strength $\beta$, is described by the self-adjoint operator $f\mapsto -f''$ in $L_2(\Real)$
restricted to functions in $W_2^2(\Real\setminus\{0\})$ obeying the interface conditions
\begin{equation}\label{ClassicDI}
\begin{pmatrix} f(+0) \\ f'(+0) \end{pmatrix}
=
\begin{pmatrix}
  1 &  \beta\\
  0 &  1
\end{pmatrix}
\begin{pmatrix} f(-0) \\ f'(-0) \end{pmatrix}.
\end{equation}
This operator is widely accepted as a model for the pseudo-Hamiltonian
\begin{equation*}
  H=-\frac{d^2}{dx^2}+\beta \la \delta'(x),\,\cdot\,\ra\,\delta'(x).
\end{equation*}
However, as shown in \cite{AlbeverioKoshmanenkoKurasovNizhnik2002}, no self-adjoint regularization $-\frac{d^2}{dx^2}+\beta \la \phi_\eps,\,\cdot\,\ra\,\phi_\eps$ of $H$ provides an approximation to point interactions \eqref{ClassicDI}.
Here the sequence of smooth functions $\phi_\eps$ converges  in the sense of distributions to the first derivative of  the Dirac delta function.
\v{S}eba \cite{SebRMP} was the first to  approximate the $\delta'$-interactions in strong resolvent sense by the ope\-rators  $-\frac{d^2}{dx^2}+\lambda_\eps \la \phi_\eps,\,\cdot\,\ra\,\phi_\eps$ with an infinitely small $\eps\to 0$ coupling constant $\lambda_\eps$.
This result can be improved to convergence in the norm resolvent sense; see for instance \cite{ExnerManko2014}, where the problem on metric graphs was stu\-died. Families of non-self-adjoint Schr\"{o}dinger operators
with nonlocal perturbations which converge to the $\delta'$-interactions were constructed by Albeverio and Nizhnik~\cite{AlbeverioNizhnik2000}.

Exner, Neidhardt and Zagrebnov \cite{KlauderPhenomenon:2001} obtained very subtle potential approximations to the $\delta'$-interactions in the norm resolvent topology: the family of potentials was built  as a triple of $\delta$-like potentials, shrinking to the origin, with non-trivial dependence between coupling constants and  separation distances. The paper became a mathematical justification of the result previously obtained by Cheon and Shigehara \cite{CheonShigehara1998}, who built the approximation in terms of three $\delta$ functions with the renormalized strengths and  disappearing distances.
In the context of the ``three delta approximation'', it is worth mentioning the works of  Albeverio, Fassari and Rinaldi~\cite{FassariRinaldi2009, AlbeverioFassariRinaldi2015} and the recent publication of Zolotaryuk~\cite{ZolotaryukThreeDelta17}; see also \cite{CheonExner2004} for the case of quantum graphs.
The reader also interested in the literature on other aspects of $\delta'$-interactions and approximations of point interactions by local and  non-local perturbations is referred to
\cite{NizhFAA2003, NizhFAA2006, AlbeverioNizhnik2006, AlbeverioFassariRinaldi2013,  KuzhelZnojil, AlbeverioNizhnik2007, AlbeverioNizhnik2013, Lange2015}.

In this paper, we study Schr\"{o}dinger operators with
loca\-li\-zed rank-two perturbations coupled with short range $\delta$-like potentials. A careful asymptotic analysis of these operators shows that some part of the set of limit operators which can be obtained in the norm resolvent topology, as the support of perturbation shrinks to the origin,  deals with the $2$-parametric family of point interactions
\begin{equation}\label{GeneralizedDI}
\begin{pmatrix} f(+0) \\ f'(+0) \end{pmatrix}
=
\begin{pmatrix}
  \alpha &  \beta\\
  0 &  \alpha^{-1}
\end{pmatrix}
\begin{pmatrix} f(-0) \\ f'(-0) \end{pmatrix}.
\end{equation}
In fact, we built a norm resolvent approximation to these  point interactions. In particular, we obtained a new approximation to the classic  $\delta'$-interaction that corresponds to the case  $\alpha=1$.

It is worth to note that the $\delta'$-interactions should not be confused with the $\delta'$ potentials.  The Schr\"{o}dinger operators with $(a\delta'+b\delta)$-like potentials
\begin{equation}\label{DeltaPrimePotentials}
\mathcal{H}_\eps=-\frac{d^2}{dx^2}+\frac{1}{\eps^{2}} \, V_2\left(\frac{x}{\eps}\right)+\frac{1}{\eps}\, V_1\left(\frac{x}{\eps}\right)
\end{equation}
have been recently   investigated in \cite{ChristianZolotarIermak03, Zolotaryuk08, GolovatyMankoUMB, GolovatyHrynivJPA2010, GolovatyMFAT2012, GolovatyIEOT2013, GolovatyHrynivProcEdinburgh2013,ToyamaNogami, Zolotaryuks2011}.
For instance, it has been proved in \cite{GolovatyIEOT2013} that $\mathcal{H}_\eps$ converge, as $\eps\to 0$, in the norm resolvent sense to the operator $f\mapsto -f''$ in $L_2(\Real)$
restricted to functions in $W_2^2(\Real\setminus\{0\})$ such that
\begin{equation}\label{DeltaPrimePtn}
\begin{pmatrix} f(+0) \\ f'(+0) \end{pmatrix}
=
\begin{pmatrix}
  \mu &  0\\
  \nu &  \mu^{-1}
\end{pmatrix}
\begin{pmatrix} f(-0) \\ f'(-0) \end{pmatrix},
\end{equation}
if potential $V_2$ possesses a zero energy resonance, and to  the direct sum $S_-\oplus S_+$ of the  half-line Schr\"odinger operators
$S_\pm= -d^2/d x^2$ on~$\Real_\pm$ subject to the Dirichlet boundary condition at the origin, otherwise. The spectral properties of models
with point interactions \eqref{DeltaPrimePtn} as well as the scattering coefficients were studied in \cite{GadellaNegroNietoPL2009, GadellaGlasserNieto2011}.

% ----------------------------------------------------------------
\section{Statement of Problem and Main Result}
Let us consider the Schr\"{o}dinger operator
\begin{equation*}
  S_0=-\frac{d^2}{dx^2}+V(x)
\end{equation*}
in $L_2(\Real)$,  where potential $V$ is a real-valued, measurable and locally bounded. We also assume that $V$ is bounded from below in $\Real$.
Let $\phi_1$ and $\phi_2$ be real functions of compact support in $L_2(\Real)$. We introduce the rank-two operators
\begin{equation*}
  (B_\eps v)(x)=\phi_1\xe\int_\Real \phi_2\se v(s)\,ds
 +\phi_2\xe\int_\Real  \phi_1\se v(s) \,ds
\end{equation*}
acting in $L_2(\Real)$, and the family of self-adjoint operators
\begin{equation*}
 S_\eps= S_0+\eps^{-3}B_\eps+\eps^{-1}q\xe.
\end{equation*}
Here $q$ is also an  real-valued, measurable and bounded function  of compact support. The perturbation of  ope\-ra\-tor $S_0$ has a small support shrinking to the origin as the small positive parameter $\eps$ goes to zero. For this reason, $\dom S_\eps=\dom S_0$.

From now on, the inner scalar product and norm in $L_2(\Real)$ will be denoted by $\langle\cdot,\cdot \rangle$ and $\|\cdot\|$ respectively.
We denote by
\begin{equation*}
    f^{(-1)}(x)=\int_{-\infty}^x f(s)\, ds, \qquad   f^{(-2)}(x)=\int_{-\infty}^x (x-s)f(s)\, ds
\end{equation*}
the first and second  antiderivatives of a function $f$. The antiderivatives are well-defined for measurable functions of compact support, for instance. In addition, if $f$ has zero mean, then $f^{(-1)}$ is also a function of compact support.
In this paper, we will consider only the case when $\phi_1$ and $\phi_2$ are functions of zero means, i.e.,
  \begin{equation}\label{ZeroMean}
    \int_\Real \phi_j\,dx=0,\qquad j=1,2.
  \end{equation}
Therefore $\phi_1^{(-1)}$ and $\phi_2^{(-1)}$ have compact supports and
the function
\begin{equation}\label{Omega}
  \omega=\|\phi_2^{(-1)}\| \cdot\phi_1^{(-2)}-\|\phi_1^{(-1)}\| \cdot\phi_2^{(-2)}
\end{equation}
is constant in some neighbourhoods of  negative and positive infinities (see Fig.~\ref{FigHBS}).

\begin{figure}[hb]
  \centering
   \includegraphics[scale=0.7]{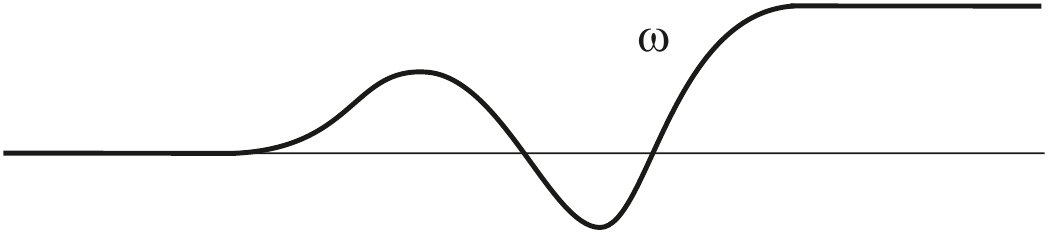}\\
  \caption{Plot of the function $\omega$.}\label{FigHBS}
\end{figure}

We introduce  notation
\begin{gather}\nonumber
\kappa=\lim_{x\to+\infty}\omega(x),
 \\\label{NotationAk}
 a_0=\int_\Real q\,dx,\qquad a_1=\int_\Real q\,\omega\,dx,\qquad a_2=\int_\Real q\,\omega^2\,dx.
\end{gather}
We will denote by $\mathcal{V}$ the space of $L_2(\Real)$-functions $f$ such that $f(x)=f_-(x)$ if $x<0$ and $f(x)=f_+(x)$ if $x>0$ for some $f_-$ and $f_+$ belonging to the domain of $S_0$.
Let us consider the operator $S_{\alpha\beta}f=-f''+Vf$,
\begin{equation*}
\dom S_{\alpha\beta}=\big\{f\in \mathcal{V}\colon f(+0)=\alpha f(-0)+\beta f'(-0),\quad f'(+0)=\alpha^{-1}f'(-0)\big\}.
\end{equation*}

Our main result is the following theorem.
\begin{thm}\label{MainTh}
Let $\phi_1$, $\phi_2$ and $q$  be integrable, real-valued functions with compact supports. Suppose that
\begin{itemize}
  \item[\textit{(i)}] $\phi_1$ and $\phi_2$ have zero means,
  antiderivatives $\phi_1^{(-1)}$ and $\phi_2^{(-1)}$ are orthogonal in $L_2(\Real)$, and
      \begin{equation}\label{n1n2}
     \|\phi_1^{(-1)}\|\cdot\|\phi_2^{(-1)}\|=1;
      \end{equation}
  \item[\textit{(ii)}]  potential $q$ satisfies conditions
  \begin{equation}\label{Qconds}
     a_0a_2=a_1^2, \qquad a_2\neq \kappa a_1.
  \end{equation}
\end{itemize}
Then the operator family $S_\eps$  converges as $\eps\to 0$ in the norm resolvent sense to operator $S_{\alpha\beta}$, where
\begin{equation}\label{AlphaBeta}
\alpha=\frac{a_2-\kappa a_1}{a_2}, \qquad \beta=\frac{\kappa^2}{a_2-\kappa a_1}.
\end{equation}
\end{thm}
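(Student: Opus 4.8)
The plan is to prove norm resolvent convergence by a matched asymptotic analysis of the resolvent equation $(S_\eps-z)u=f$, reducing the statement to the behaviour of $u$ across the shrinking support $I_\eps=\eps\,\supp$ of the perturbation. Fix $z$ with $\Im z\neq0$ and let $\supp\phi_j,\supp q\subset[-a,a]$. Outside $I_\eps$ the equation is merely $-u''+Vu-zu=f$, so there $u$ coincides with $(S_0-z)^{-1}f$ up to the two solutions of $-\psi''+V\psi-z\psi=0$ that are square integrable at $\pm\infty$. Hence the entire effect of the localized perturbation is carried by the transfer map sending the Cauchy data $\bigl(u(-\eps a),u'(-\eps a)\bigr)$ at the left edge of $I_\eps$ to $\bigl(u(\eps a),u'(\eps a)\bigr)$ at the right edge. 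The core of the argument is to show that this map converges, as $\eps\to0$, to multiplication by the matrix in \eqref{GeneralizedDI} with $\alpha,\beta$ from \eqref{AlphaBeta}; norm resolvent convergence then follows by expressing the resolvent difference through the fixed decaying solutions and the uniformly small mismatch of Cauchy data.

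To analyse the transfer map I rescale $\xi=x/\eps$, $w(\xi)=u(\eps\xi)$. Using the zero-mean assumption \eqref{ZeroMean} the resolvent equation becomes
\begin{equation*}
-w''+\phi_1\la\phi_2,w\ra+\phi_2\la\phi_1,w\ra+\eps\, q\, w+\eps^2\bigl(V(\eps\xi)-z\bigr)w=\eps^2 f(\eps\xi),
\end{equation*}
where the scalar products are now taken in $\xi$ and the right-hand side is $O(\eps^{3/2})$ in $L_2$. Seeking $w=w_0+\eps w_1+o(\eps)$, the leading order solves $Lw_0=0$ with $L=-\tfrac{d^2}{d\xi^2}+\phi_1\la\phi_2,\cdot\,\ra+\phi_2\la\phi_1,\cdot\,\ra$. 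Writing $c_1=\la\phi_2,w_0\ra$, $c_2=\la\phi_1,w_0\ra$ and integrating $w_0''=c_1\phi_1+c_2\phi_2$ twice, every bounded solution is $w_0=B+c_1\spr{\phi_1}+c_2\spr{\phi_2}$, where $(c_1,c_2)$ solves a $2\times2$ system. Assumption (i) is exactly what makes this system degenerate: the orthogonality of $\fpr{\phi_1}$ and $\fpr{\phi_2}$ reduces it to one with diagonal entries $1$, whose determinant $1-\norm{\fpr{\phi_1}}^2\norm{\fpr{\phi_2}}^2$ then vanishes by the normalization \eqref{n1n2}. Consequently $\omega$ from \eqref{Omega} is a bounded solution of $Lw=0$ and $w_0=f(-0)+t\,\omega$ for a free parameter $t$; since $\omega(-\infty)=0$, $\omega(+\infty)=\kappa$ and $\omega'(\pm\infty)=0$, matching to the outer data gives $f(+0)-f(-0)=\kappa\,t$.

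The two remaining interface relations come from the first-order problem $Lw_1=-q\,w_0$. Since $L$ is formally self-adjoint with $L1=L\omega=0$ (the first equality by \eqref{ZeroMean}), multiplying by $1$ and by $\omega$ and integrating over $\Real$ — where only the boundary terms at $+\infty$, at which $\omega\to\kappa$ and $w_1'\to f'(+0)$, survive — yields
\begin{align*}
f'(+0)-f'(-0)&=a_0\,f(-0)+a_1\,t,\\
\kappa\,f'(+0)&=a_1\,f(-0)+a_2\,t,
\end{align*}
with $a_0,a_1,a_2$ as in \eqref{NotationAk} and $f'(-0)=w_1'(-\infty)$. Together with $f(+0)-f(-0)=\kappa\,t$ this is a linear system for $f(+0)$, $f'(+0)$ and the internal parameter $t$, and eliminating $t$ is where assumption (ii) is decisive: solving the second relation for $t$ and substituting, the identity $a_0a_2=a_1^2$ forces the coefficient of $f(-0)$ in $f'(+0)$ to collapse to $(a_0a_2-a_1^2)/(a_2-\kappa a_1)=0$, giving $f'(+0)=\alpha^{-1}f'(-0)$, while the coefficients in $f(+0)=\alpha f(-0)+\beta f'(-0)$ reduce to exactly \eqref{AlphaBeta}. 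The hypothesis $a_2\neq\kappa a_1$ (which also forces $a_2\neq0$) keeps every denominator nonzero, so that $\alpha,\beta$ are well defined and $\alpha\neq0$, reproducing the matrix \eqref{GeneralizedDI}.

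The step I expect to be the main obstacle is turning this formal expansion into convergence in operator norm. Two difficulties stand out. First, $L$ has the nontrivial bounded kernel $\{1,\omega\}$ — the degeneracy being precisely the vanishing of the determinant $1-\norm{\fpr{\phi_1}}^2\norm{\fpr{\phi_2}}^2$ forced by (i) — so the inner problems are resonant, solvable only under the compatibility conditions above; one must show that the inner solution exists, is determined up to $(f(-0),t)$, and obeys the two-term expansion with a remainder controlled uniformly in $\eps$ and in $f$. It is exactly this degeneracy that lets the strong coupling $\eps^{-3}B_\eps$ produce a finite nontrivial interface condition rather than a Dirichlet decoupling. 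Second, one must patch the inner profile $w_0+\eps w_1$ to the outer solution of $(S_0-z)u=f$ carrying the $\eps$-dependent Cauchy data, verify that the resulting $\tilde u_\eps$ satisfies $(S_\eps-z)\tilde u_\eps=f+r_\eps$ with $\norm{r_\eps}\le C\eps^{1/2}\norm{f}$, and then conclude from the uniform bound $\norm{(S_\eps-z)^{-1}}\le|\Im z|^{-1}$ that $\norm{(S_\eps-z)^{-1}-(S_{\alpha\beta}-z)^{-1}}\to0$. The uniform control of the remainder across the resonant inner layer is the technical heart of the proof.
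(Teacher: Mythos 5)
Your proposal follows essentially the same route as the paper: the leading-order inner problem is the half-bound-state equation for the rank-two operator (degenerate exactly because of (i)), the first-order solvability conditions obtained by pairing with $1$ and $\omega$ yield the interface conditions with $\alpha,\beta$ as in \eqref{AlphaBeta} under (ii), and the rigorous step is the same quasi-mode construction with an $O(\eps^{1/2})$ residual combined with the uniform bound $\|(S_\eps-z)^{-1}\|\leq|\Im z|^{-1}$. Your formal computation is correct, and the technical points you flag as remaining are precisely the paper's auxiliary propositions (unique solvability of the resonant inner boundary value problem with a uniform $W_2^2$ bound, and a corrector removing the $O(\eps^{1/2})$ jumps at $x=\pm\eps$).
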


There is a wide class of functions  $\phi_1$, $\phi_2$ and $q$ satisfying the assumptions in Theorem~\ref{MainTh}. Moreover, for any pair $(\alpha, \beta)$ of real numbers with $\alpha\neq 0$, there exists a family of operators $S_\eps$ such that $S_\eps\to S_{\alpha\beta}$ as $\eps\to 0$ in the norm resolvent sense. The sole exception is  the point interactions
with matrix
\begin{equation*}
\begin{pmatrix}
       \alpha & 0 \\
       0 & \alpha^{-1}
  \end{pmatrix}, \quad \alpha\neq 1,
\end{equation*}
because $\beta$ vanishes together with $\kappa$ and therefore $\alpha=1$ by \eqref{AlphaBeta}. Note that the last point interactions correspond to the case of $\delta'$-potentials and can be approximated by operators
\eqref{DeltaPrimePotentials} with $V_1=0$ provided $V_2$  possesses a zero energy resonance \cite{GolovatyHrynivJPA2010, GolovatyHrynivProcEdinburgh2013}.

It is a simple matter to choose $\phi_1$ and $\phi_2$.  For instance, it is enough to take two orthonormal in $L_2(\Real)$ functions $\eta_1$ and $\eta_2$ of compact support belonging to $W_2^1(\Real)$, and then set $\phi_1=\eta_1'$ and $\phi_2=\eta_2'$.
With these functions in hands, we can construct  $\omega$ and calculate $\kappa=\omega(+\infty)$. If $\kappa=0$, then $\alpha=1$ and $\beta=0$, and the limiting operator  is the free Schr\"{o}dinger operator on the line.
Suppose now that $\kappa$ is different from zero.  Note that $\omega= \eta_1^{(-1)}-\eta_2^{(-1)}$ is a continuous and non-constant function, by the orthogonality of  $\eta_1$ and $\eta_2$. Hence $1$, $\omega$, $\omega^2$ are linearly independent functions on each interval $[-r,r]$. Then for any  $(a_0,a_1,a_2)\in\Real^3$ there exists a potential $q$ of compact support for which equalities \eqref{NotationAk} hold.
Given $\alpha\neq1$ and $\beta\neq 0$, we choose $q$ such that
 \begin{equation*}
   a_0=\frac{(1-\alpha)^2}{\alpha\beta},\quad a_1=\frac{\kappa(1-\alpha)}{\alpha\beta},\quad
   a_2=\frac{\kappa^2}{\alpha\beta}.
 \end{equation*}
This triple of numbers satisfies \eqref{Qconds} and \eqref{AlphaBeta}.
The same is true for the case of the classic $\delta'$-interactions when $\alpha=1$ and $\beta\neq 0$, if we set
 $a_0=0$, $a_1=0$ and $a_2=\beta^{-1}\kappa^2$.

\begin{rem}
In this paper we do not consider the case $\kappa=0$, i.e.,  the limit operator is  the free Schr\"{o}dinger operator. From now on,  we will assume that $\kappa$ is different from zero.
\end{rem}

\begin{rem}
The orthogonality of $\phi_1^{(-1)}$ and $\phi_2^{(-1)}$ of course implies the linear independence of $\phi_1$ and $\phi_2$. Hence operator $B_\eps$ has actually rank two.
\end{rem}

\section{Half-Bound States}
Let us consider the operator
$$
   B=-\frac{d^2}{dx^2}+\langle \phi_2, \,\cdot\,\rangle\,\phi_1 +\langle \phi_1, \,\cdot\,\rangle\,\phi_2,\quad \dom B=W_2^2(\Real)
$$
in space $L_2(\Real)$.

\begin{defn}
We say that the  operator~$B$ possesses a half-bound state provided there exists a nontrivial solution~$\psi$ of the equation
\begin{equation}\label{EquationBu0}
-\psi''+\langle \phi_2, \psi\rangle\,\phi_1 +\langle \phi_1, \psi \rangle\,\phi_2=0
\end{equation}
that is bounded on the whole line.
\end{defn}

Let us introduce notation
\begin{equation*}
n_j=\big\|\phi_j^{(-1)}\big\|,\qquad  m_{j}=\int_\Real x \phi_j\,dx,
\qquad j=1,2.
\end{equation*}
Now we prove the first of two  key lemmas  for the proof of main theorem.

\begin{lem}\label{LemmaHBS}
Under  assumption \textit{(i)} of  Theorem~\ref{MainTh}, the operator $B$ possesses the $2$-dimensional space of half-bound states  generated by the constant function and function $\omega$, given by \eqref{Omega}.
\end{lem}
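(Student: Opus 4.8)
The plan is to reduce the nonlocal equation \eqref{EquationBu0} to a linear second-order ODE in which the two scalar quantities $c_1=\la \phi_1,\psi\ra$ and $c_2=\la \phi_2,\psi\ra$ are treated as unknown constants, and then to close the loop by imposing self-consistency. Indeed, freezing $c_1$ and $c_2$ turns \eqref{EquationBu0} into $\psi''=c_2\phi_1+c_1\phi_2$, whose general solution is
\begin{equation*}
\psi = c_2\,\spr{\phi_1}+c_1\,\spr{\phi_2}+Ax+B,\qquad A,B\in\Real,
\end{equation*}
since $(\spr{\phi_j})''=\phi_j$. Because $\phi_1$ and $\phi_2$ have zero means, both $\spr{\phi_1}$ and $\spr{\phi_2}$ are bounded (they are constant near $\pm\infty$, equal to $-m_j$ on the right and $0$ on the left), so $\psi$ is bounded on $\Real$ if and only if $A=0$. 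Thus every half-bound state has the form $\psi=c_2\,\spr{\phi_1}+c_1\,\spr{\phi_2}+B$.

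Next I would compute the three inner products needed to close the system. Writing $\phi_j=(\fpr{\phi_j})'$, integrating by parts, and using that $\fpr{\phi_1},\fpr{\phi_2}$ have compact support (so all boundary terms vanish) together with $(\spr{\phi_j})'=\fpr{\phi_j}$, I expect
\begin{equation*}
\la \phi_j,\spr{\phi_k}\ra=-\la \fpr{\phi_j},\fpr{\phi_k}\ra,\qquad \la \phi_j,1\ra=0.
\end{equation*}
The orthogonality of $\fpr{\phi_1}$ and $\fpr{\phi_2}$ then annihilates the off-diagonal terms and leaves $\la \phi_j,\spr{\phi_j}\ra=-n_j^2$. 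Substituting the bounded form of $\psi$ into the definitions of $c_1$ and $c_2$ yields the consistency relations $c_1=-n_1^2c_2$ and $c_2=-n_2^2c_1$.

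Here the normalization \eqref{n1n2} is decisive: with $n_1n_2=1$ the two relations coincide (each reads $c_1=-n_1^2c_2$), so $(c_1,c_2)$ ranges over a one-dimensional set while $B$ remains free, giving a two-dimensional solution space. (Had $n_1n_2\neq1$, the two relations would force $c_1=c_2=0$ and only the constant would survive.) Finally I would exhibit an explicit basis: the choice $c_1=c_2=0$, $B=1$ gives the constant function, which solves \eqref{EquationBu0} precisely because $\la \phi_j,1\ra=0$, while $c_2=n_2$, $c_1=-n_1$, $B=0$ reproduces $\omega$ from \eqref{Omega} and satisfies $c_1=-n_1^2c_2$ in view of $n_1n_2=1$. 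These two are linearly independent because $\omega$ is nonconstant: $\omega'=n_2\,\fpr{\phi_1}-n_1\,\fpr{\phi_2}$ cannot vanish identically, as that would force $\fpr{\phi_1}$ and $\fpr{\phi_2}$ to be proportional, contradicting their orthogonality and nonvanishing.

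The main obstacle is conceptual rather than computational, namely the reformulation in the first step: recognizing that the apparently nonlocal problem collapses to a finite-dimensional consistency condition once $c_1,c_2$ are frozen. The remaining work — the integration-by-parts identities and the bookkeeping of boundary terms — is routine, but it must be carried out with care, since the whole conclusion hinges on the hypotheses (zero means, orthogonality, and above all the normalization $n_1n_2=1$) conspiring exactly to produce a two-dimensional rather than one-dimensional space.
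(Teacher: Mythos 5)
Your proof is correct and follows essentially the same route as the paper: both write the general solution of \eqref{EquationBu0} as $c\,\phi_1^{(-2)}+c'\,\phi_2^{(-2)}+Ax+B$, kill the linear term by boundedness, and reduce to a $2\times2$ consistency system whose matrix $\begin{pmatrix} n_1^2 & 1\\ 1 & n_2^2\end{pmatrix}$ degenerates exactly under \eqref{n1n2} via the integration-by-parts identity $\la\phi_i^{(-2)},\phi_j\ra=-\la\phi_i^{(-1)},\phi_j^{(-1)}\ra$. Your "freeze the scalars, then impose self-consistency" phrasing is just the paper's conditions \eqref{HBSConds} in different clothing, and your explicit check that $\omega$ is nonconstant is a small bonus the paper leaves implicit.
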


\begin{proof}
  Any solution of  \eqref{EquationBu0} can be written as
  \begin{equation*}
  \psi(x)=c_1\phi_1^{(-2)}(x)+c_2\phi_2^{(-2)}(x)+c_3 x+c_4,
  \end{equation*}
where the constants $c_k$ are connected via two conditions
\begin{equation}\label{HBSConds}
\begin{aligned}
  &\langle\phi_1^{(-2)},\phi_1\rangle \,c_1+ \bigl(\langle\phi_2^{(-2)},\phi_1\rangle -1\bigr)\,c_2+m_1c_3=0,
  \\
  &\bigl(\langle\phi_1^{(-2)},\phi_2\rangle -1\bigr)\,c_1+\langle\phi_2^{(-2)},\phi_2\rangle \,c_2+m_2c_3=0.
\end{aligned}
\end{equation}
These conditions can be easy derived from \eqref{EquationBu0} in view of the linear independence of $\phi_1$ and $\phi_2$.
In general, $\phi_1^{(-2)}$ and $\phi_2^{(-2)}$ do not belong to $L_2(\Real)$. But it will cause no confusion if we  use the scalar products $\langle\phi_i^{(-2)},\phi_j\rangle$ as notation for the integrals $\int_\Real \phi_i^{(-2)} \phi_j\,dx$,
which are finite, because of  compact supports of $\phi_j$.

Since $\psi=c_3 x+c_4$ in  some neighbourhood of  negative infinity, the constant $c_3$ must be zero, because we are looking for bounded solutions. Also, the constant function is a half-bound state, since $\phi_1$ and $\phi_2$ have zero means. Therefore if any other (linearly independent) half-bound state exists, then it  has the form
 \begin{equation*}
  \psi(x)=c_1\phi_1^{(-2)}(x)+c_2\phi_2^{(-2)}(x),
  \end{equation*}
where vector $\vec{c}=(c_1,c_2)$ must be a nontrivial solution of the linear system $A\vec{c}=0$ with matrix
\begin{equation*}
  A=
  \begin{pmatrix}
    \langle\phi_1^{(-2)},\phi_1\rangle  & \langle\phi_2^{(-2)},\phi_1\rangle -1\\
    \langle\phi_1^{(-2)},\phi_2\rangle -1 & \langle\phi_2^{(-2)},\phi_2\rangle
      \end{pmatrix}.
\end{equation*}
The system is obtained from \eqref{HBSConds} by putting $c_3=0$.

Since $\phi_1^{(-1)}$ and $\phi_2^{(-1)}$ are compactly supported, we  obtain
  \begin{equation*}
 \int_\Real \phi_i^{(-2)} \phi_j\,dx=\phi_i^{(-2)}\phi_j^{(-1)}\Big|_{-\infty}^{+\infty} -\int_\Real \phi_i^{(-1)} \phi_j^{(-1)}\,dx=-\la\phi_i^{(-1)},\phi_j^{(-1)}\ra.
  \end{equation*}
According to the assumptions,
antiderivative $\phi_1^{(-1)}$ and $\phi_2^{(-1)}$ are orthogonal in $L_2(\Real)$. From this we have
$\la\phi_1^{(-2)},\phi_2\ra=\la\phi_2^{(-2)},\phi_1\ra=0$,
 $\la\phi_1^{(-2)},\phi_1\ra=-n_1^2$ and $\la\phi_2^{(-2)},\phi_2\ra=-n_2^2$.
Hence
\begin{equation*}
  A=-
  \begin{pmatrix}
      n_1^2  & 1  \\
      1 & n_2^2
  \end{pmatrix}.
\end{equation*}
Matrix $A$ is degenerate by \eqref{n1n2} and thereby system $A\vec{c}=0$
admit a non trivial solution $\vec{c}=(n_2, -n_1)$. Hence, the function $\omega=n_2 \phi_1^{(-2)}-n_1 \phi_2^{(-2)}$ is also a half-bound state of $B$.
\end{proof}

\section{Auxiliary statements}

Without loss of generality we can assume that the supports of  $\phi_1$, $\phi_2$ and $q$ lie in interval $\cI=[-1,1]$. Then
\begin{equation}\label{Phi-1}
  \phi_j^{(-k)}(-1)=0
\end{equation}
for  $k=0,1,2$ and $j=1,2$.
Also,
\begin{equation}\label{Phi1}
  \phi_1^{(-1)}(1)=0, \quad  \phi_2^{(-1)}(1)=0, \quad
  \phi_1^{(-2)}(1)=-m_1, \quad \phi_2^{(-2)}(1)=-m_2,
\end{equation}
because from \eqref{ZeroMean} we have
\begin{equation*}
  \phi_j^{(-2)}(1)=\int_{-\infty}^{1}(1-x)\phi_j(x)\,dx=
  \int_{\Real}\phi_j(x)\,dx
  -\int_{\Real}x\phi_j(x)\,dx=-m_j.
\end{equation*}
Then  we also deduce that $\omega(-1)=\omega'(-1)=\omega'(1)=0$ and $\omega(1)=\kappa$, where
\begin{equation}\label{Kappa}
  \kappa=n_1m_2-n_2m_1.
\end{equation}

Next, a half-bound state $\psi$ of  $B$ is now  constant  outside $\cI$ as a bounded solution of equation $\psi''=0$, and therefore the restriction of $\psi$ to $\cI$ is a non-trivial solution of the  boundary value problem
\begin{equation}\label{NeumanProblem}
     -\psi''+(\phi_2, \psi)\,\phi_1 +( \phi_1, \psi)\,\phi_2=0,\quad t\in \cI, \qquad   \psi'(-1)=0, \; \psi'(1)=0,
\end{equation}
where  $(\cdot,\cdot)$ is the scalar product in $L_2(\cI)$.

Given $h\in L_2(\cI)$ and $a, b\in \mathbb{C}$, we consider the nonhomogeneous problem
\begin{equation}\label{NHbvp}
  -v'' +(\phi_2, v)\,\phi_1 +(\phi_1, v)\,\phi_2=h,\quad t\in\cI, \qquad
 v'(-1)=a, \; v'(1)=b.
\end{equation}
Owing to Lemma~\ref{LemmaHBS}, homogeneous problem \eqref{NeumanProblem} has a $2$-dimensional space of solutions. Therefore  problem \eqref{NHbvp} is in general unsolvable.

\begin{prop}\label{LemmaNHbvp}
Under assumption \textit{(i)} of  Theorem~\ref{MainTh},  the nonhomogeneous  boun\-dary value problem \eqref{NHbvp} admits a solution if and only if
\begin{equation}\label{SolvabilityOfNHP}
a-b=(1, h), \qquad
 a=(1-\kappa^{-1}\omega,h).
\end{equation}
Then among all solutions of \eqref{NHbvp} there exists a unique one such that
\begin{equation}\label{V(1)=0}
v(-1)=0, \qquad  v(1)=0.
\end{equation}
In addition, this solution satisfies  the estimate
\begin{equation}\label{EstV}
  \|v\|_{W_2^2(\cI)}\leq C\|h\|_{L_2(\cI)},
\end{equation}
where the constant $C$ does not depend on $h$.
\end{prop}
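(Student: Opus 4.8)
The plan is to treat \eqref{NHbvp} as a Fredholm problem and to read off the solvability conditions from a Green-type identity, exploiting that the two half-bound states of Lemma~\ref{LemmaHBS} span the kernel of the homogeneous Neumann problem \eqref{NeumanProblem}. Writing $Lv=-v''+(\phi_2,v)\,\phi_1+(\phi_1,v)\,\phi_2$, I would first observe that $L$ is formally symmetric on $\cI$: the rank-two terms contribute the symmetric expression $(\phi_2,v)(\phi_1,u)+(\phi_1,v)(\phi_2,u)$ to both $(Lv,u)$ and $(v,Lu)$ and therefore cancel, leaving
\begin{equation*}
(Lv,u)-(v,Lu)=\big[v u'-v'u\big]_{-1}^{1}.
\end{equation*}
Taking $u$ to be a half-bound state (so $Lu=0$ and $u'(\pm1)=0$) and $v$ a solution of \eqref{NHbvp} (so $Lv=h$, $v'(-1)=a$, $v'(1)=b$) collapses the right-hand side and yields the necessary condition $(h,u)=a\,u(-1)-b\,u(1)$ for every half-bound state $u$.

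For the two basis states this is explicit. With $u\equiv1$ one has $u(\pm1)=1$, whence $a-b=(1,h)$. With $u=\omega$, the boundary values recorded above, namely $\omega(-1)=0$ and $\omega(1)=\kappa$, give $-\kappa b=(\omega,h)$; eliminating $b$ by means of the first relation produces $a=(1,h)-\kappa^{-1}(\omega,h)=(1-\kappa^{-1}\omega,h)$. These are precisely the conditions \eqref{SolvabilityOfNHP}.

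To see that \eqref{SolvabilityOfNHP} is also sufficient, I would argue by a dimension count. The map $\Phi\colon W_2^2(\cI)\to L_2(\cI)\times\mathbb{C}^2$, $\Phi v=(Lv,v'(-1),v'(1))$, differs from the pure Neumann map $v\mapsto(-v'',v'(-1),v'(1))$ by a finite-rank operator and hence shares its Fredholm index, which is zero. Since $\ker\Phi$ is the two-dimensional space of half-bound states, the range of $\Phi$ has codimension two. The two functionals in \eqref{SolvabilityOfNHP} are linearly independent (the coefficient of $b$ distinguishes them), so they cut out a subspace of codimension two which, by the necessity already proved, contains the range; the two subspaces must coincide, which is sufficiency. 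Uniqueness of the normalized solution is then immediate: the general solution differs by $c_1\cdot1+c_2\,\omega$, and imposing $v(-1)=v(1)=0$ leads to a linear system whose matrix $\left(\begin{smallmatrix}1&0\\1&\kappa\end{smallmatrix}\right)$ has determinant $\kappa\neq0$, so $(c_1,c_2)$ is uniquely determined.

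Finally, the bound \eqref{EstV} follows from the closed graph theorem once the solution operator $T\colon h\mapsto v$ is known to be defined on all of $L_2(\cI)$: for arbitrary $h$ one sets $a$ and $b$ by \eqref{SolvabilityOfNHP} and normalizes by \eqref{V(1)=0}. If $h_n\to h$ in $L_2(\cI)$ and $Th_n\to w$ in $W_2^2(\cI)$, then passing to the limit in the equation and in the boundary conditions (the data $a_n,b_n$ depend continuously on $h_n$) shows $w=Th$, so $T$ is closed and hence bounded. I expect the only delicate point to be the sufficiency step: necessity and uniqueness are essentially bookkeeping with the Green identity and with the values $\omega(\pm1)$, whereas sufficiency rests on correctly identifying the index and the codimension of the range, so that the two explicit conditions are not merely necessary but exhaust the obstruction to solvability. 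A fully constructive alternative would solve the Neumann problem $-v''=h-(\phi_2,v)\,\phi_1-(\phi_1,v)\,\phi_2$ explicitly and reduce the self-consistency for the scalars $(\phi_1,v),(\phi_2,v)$ to a $2\times2$ system governed by the degenerate matrix $A$ of Lemma~\ref{LemmaHBS}, but the index argument is shorter.
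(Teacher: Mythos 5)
Your proof is correct, and for the sufficiency and the a priori estimate it takes a genuinely different route from the paper. The necessity part coincides with the paper's (the author also obtains \eqref{SolvabilityOfNHP} by pairing the equation with $1$ and $\omega$ and integrating by parts twice, which is exactly your Green identity together with the values $\omega(-1)=0$, $\omega(1)=\kappa$). For sufficiency, however, the paper explicitly declines the abstract route you chose --- it remarks that sufficiency ``follows from the Fredholm alternative'' but instead constructs the solution by hand: a particular solution of the form $v_0=k_1\phi_1^{(-2)}+k_2\phi_2^{(-2)}-h^{(-2)}+at$ automatically satisfies the boundary conditions, and substituting it into the equation reduces everything to the degenerate $2\times2$ system \eqref{LinSysK}, whose consistency condition $g_1=g_2$ is shown to be equivalent to the second relation in \eqref{SolvabilityOfNHP}; the normalized solution is then written down in closed form. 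Your index argument (the rank-two perturbation preserves the index $0$ of the Neumann map, the kernel is the two-dimensional space of half-bound states, so the closed range has codimension two and must equal the common kernel of the two independent necessary functionals) is sound, as is your $\bigl(\begin{smallmatrix}1&0\\1&\kappa\end{smallmatrix}\bigr)$ computation for uniqueness, which uses the standing assumption $\kappa\neq0$. Likewise your closed-graph derivation of \eqref{EstV} is valid once $a$ and $b$ are regarded as the functionals \eqref{ABonH} of $h$, whereas the paper reads the bound directly off the explicit formula for $v$ using the boundedness of $h\mapsto h^{(-2)}$ in $W_2^2(\cI)$. What each approach buys: yours is shorter and purely soft, at the price of invoking Fredholm stability and the closed graph theorem; the paper's constructive proof is self-contained, produces the explicit solution and the explicit dependence of $a$, $b$, $g_1$ on $h$, and mirrors the computation already done in Lemma~\ref{LemmaHBS}. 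One small point worth making explicit in your version: the identification of $\ker\Phi$ with the two-dimensional span of $1$ and $\omega$ requires the (easy) converse observation that every solution of the homogeneous Neumann problem on $\cI$ extends to a bounded solution on $\Real$, so that Lemma~\ref{LemmaHBS} really does give the full kernel and not just a subspace of it.
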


\begin{proof}
Conditions \eqref{SolvabilityOfNHP} can be easy obtained by multiplying equation \eqref{NHbvp} by $1$ and $\omega$ in turn and then integrating by parts twice in view of the boundary conditions. Though the sufficiency of \eqref{SolvabilityOfNHP} follows from the Fredholm alternative, we will prove it directly by explicit construction of the desired solution.

We look for a partial solution of \eqref{NHbvp} in the form
\begin{equation*}
   v_0=k_1\phi_1^{(-2)}+k_2\phi_2^{(-2)}-h^{(-2)}+at,
\end{equation*}
where $k_1$, $k_2$ are arbitrary constants and $h^{(-2)}(t)=\int_{-1}^t(t-s)h(s)\,ds$ .
Function $v_0$ satisfies boundary conditions \eqref{NHbvp} for all $k_1$ and $k_2$. In fact,
\begin{equation*}
  v_0'(-1)=k_1\phi_1^{(-1)}(-1)+k_2\phi_2^{(-1)}(-1)-h^{(-1)}(-1)+a=a,
\end{equation*}
by \eqref{Phi-1}. From \eqref{Phi1} and the first solvability condition in \eqref{SolvabilityOfNHP} we see
\begin{equation*}
  v_0'(1)=k_1\phi_1^{(-1)}(1)+k_2\phi_2^{(-1)}(1)-h^{(-1)}(1)+a=a-(1,h)=b,
\end{equation*}
since $h^{(-1)}(1)=(1,h)$.
Direct substitution $v_0$ into equation \eqref{NHbvp} yields
\begin{equation*}
  \begin{pmatrix}
      n_1^2  & 1  \\
      1 & n_2^2
  \end{pmatrix}
  \begin{pmatrix}
    k_1\\k_2
  \end{pmatrix}
  =
  \begin{pmatrix}
   am_1-(\phi_1, h^{(-2)})  \\
   am_2-(\phi_2, h^{(-2)})
  \end{pmatrix},
\end{equation*}
(cf. the proof of Lemma~\ref{LemmaHBS}).  According to \eqref{n1n2} we have  $n_1n_2=1$, and  then the system can be written as
\begin{equation}\label{LinSysK}
  \begin{pmatrix}
      n_1  & n_2  \\
      n_1 & n_2
  \end{pmatrix}
  \begin{pmatrix}
    k_1\\k_2
  \end{pmatrix}
  =
  \begin{pmatrix}
   g_1  \\
   g_2
  \end{pmatrix},
\end{equation}
where $g_1=n_2(am_1-(\phi_1, h^{(-2)}))$ and $g_2=n_1(am_2-(\phi_2, h^{(-2)}))$. Therefore the system is consistent if and only if $g_1=g_2$. But this equality is equivalent to the second solvability condition in \eqref{SolvabilityOfNHP}. Indeed, recalling now \eqref{Kappa}, we have
\begin{multline*}
  g_2-g_1=a(n_1m_2- n_2m_1)+(n_2\phi_1-n_1\phi_2, h^{(-2)})\\=a \kappa +(\omega'',h^{(-2)})
  = \kappa \big(a-(1-\kappa^{-1}\omega,h)\big)=0,
\end{multline*}
because integrating by parts twice gives
\begin{equation*}
(\omega'',h^{(-2)})=-\kappa h^{(-1)}(1)+(\omega, h)=-\kappa (1,h)+(\omega, h)=-\kappa (1-\kappa^{-1}\omega,h).
\end{equation*}
The vector $\vec{k}=(n_2g_1,0)$ solves  \eqref{LinSysK} and then
$v_0=n_2g_1\phi_1^{(-2)}-h^{(-2)}+at$
is a solution of \eqref{NHbvp}.
With the aid of $v_0$ we can construct a solution $v$ satisfying conditions \eqref{V(1)=0}. We set $v=v_0+a-\kappa^{-1}(v_0(1)+a)\,\omega$, i.e.,
\begin{equation*}
  v=  n_2g_1\phi_1^{(-2)}-h^{(-2)}+a(t+1)
  -\tfrac1{\kappa}\left(n_2m_1g_1-h^{(-2)}(1)+2a\right)\omega.
\end{equation*}

Estimate \eqref{EstV} looks strange  at first sight, because a solution of \eqref{NHbvp} is bounded by the right-hand side $h$ of the equation only without regard for right-hand sides $a$ and $b$ in the boundary conditions. But by virtue of solvability  conditions \eqref{SolvabilityOfNHP}, numbers $a$ and $b$ can be expressed via function $h$:
\begin{equation}\label{ABonH}
 a(h)=(1-\kappa^{-1}\omega,h), \qquad b(h)=-\kappa^{-1}(\omega, h).
\end{equation}
Therefore for each $h\in L_2(\cI)$ there exists a unique boundary data $(a(h),b(h))$ such that problem \eqref{NHbvp} is solvable.
So regarding  $a$ and $g_1$ as linear functionals in $L_2(\cI)$, we have the bounds
\begin{align*}
  &|a(h)|=\left|(1-\kappa^{-1}\omega,h)\right|\leq C_1\|h\|_{L_2(\cI)}, \\
  &|g_1(h)|=|n_2|\cdot|m_1a(h)-(\phi_1, h^{(-2)})|\leq C_2(|a(h)|+\|h^{(-2)}\|_{L_2(\cI)})\leq C_3\|h\|_{L_2(\cI)}.
\end{align*}
From this and explicit formula for $v$ we immediately deduce
\begin{equation*}
\|v\|_{W_2^2(\cI)}\leq C_4\left(|a(h)|+|g_1(h)|+\|h^{(-2)}\|_{W_2^2(\cI)}\right)\leq C_5\|h\|_{L_2(\cI)},
\end{equation*}
since the operator $L_2(\cI)\ni h\mapsto h^{(-2)}\in W^2_2(\cI)$ is bounded.
\end{proof}

In the end of the section,  we record some technical assertion.
Let $[g]_{a}$ denote  the jump $g(a+0)-g(a-0)$ of  function $g$ at a point $a$.

\begin{prop}\label{PropW22Corrector}
Let $U$ be the real line with two removed points $x=-\eps$ and $x=\eps$, i.e., $U=\Real\setminus \{-\eps,\eps\}$.
Assume that function $g\in W_{2, loc}^2(U)$ along with its first derivative has jump discontinuities at points $x=-\eps$ and $x=\eps$. There exists a function $\rho\in C^\infty(U)$ such that   $g+\rho$ belongs to $W_{2, loc}^2(\Real)$ and
    \begin{equation}\label{REst}
        |\rho^{(k)}(x)|\leq C \Bigl(\bigl|[g]_{-\eps}\bigr|+\bigl|[g]_{\eps}\bigr|
        +\bigl|[g']_{-\eps}\bigr|+\bigl|[g']_{\eps}\bigr|\Bigr)
    \end{equation}
    for $|x|\geq \eps$,  $k=0,1,2$, where the constant $C$ does not depend on $g$ and $\eps$. Moreover, $\rho$ is a function of compact support and $\rho$ vanishes in $(-\eps,\eps)$.
\end{prop}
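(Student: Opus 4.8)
The plan is to construct $\rho$ explicitly as a smooth correction built from a fixed cutoff profile, and then read the estimate off the construction. The function $g+\rho$ must lie in $W^2_{2,loc}(\Real)$, which forces $g+\rho$ and its first derivative to be continuous across both $x=-\eps$ and $x=\eps$. Equivalently, $\rho$ must have prescribed jumps that cancel those of $g$: namely $[\rho]_{\pm\eps}=-[g]_{\pm\eps}$ and $[\rho']_{\pm\eps}=-[g']_{\pm\eps}$. So the first step is to reduce the problem to producing, at each of the two points, a smooth (off that point) function whose value and derivative jumps equal four prescribed numbers, with the function supported outside $(-\eps,\eps)$ and satisfying the pointwise bound \eqref{REst}.

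Next I would fix, once and for all, two smooth scalar profiles $\chi_0$ and $\chi_1$ on $\Real$ that are independent of $\eps$ and $g$: a profile $\chi_0$ realizing a unit jump in value with zero jump in derivative, and a profile $\chi_1$ realizing a unit jump in derivative with zero jump in value, each supported away from a prescribed interval so that the resulting correction vanishes on $(-\eps,\eps)$. Concretely, to handle the right point $x=\eps$ I would take translates/reflections of these profiles supported in $[\eps,+\infty)$, and to handle the left point $x=-\eps$ I would use profiles supported in $(-\infty,-\eps]$; this automatically guarantees that $\rho$ vanishes on $(-\eps,\eps)$ and is $C^\infty$ on $U=\Real\setminus\{-\eps,\eps\}$. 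Then I would set
\begin{equation*}
  \rho = -[g]_{-\eps}\,\chi_0^{-} - [g']_{-\eps}\,\chi_1^{-}
         -[g]_{\eps}\,\chi_0^{+}  - [g']_{\eps}\,\chi_1^{+},
\end{equation*}
where the superscripts indicate the left/right supported profiles. By design the four jump conditions hold, so $g+\rho\in W^2_{2,loc}(\Real)$, and since each fixed profile and its first two derivatives are bounded by an absolute constant, estimate \eqref{REst} for $k=0,1,2$ follows immediately with $C=\max_k(\|\chi_0^{(k)}\|_\infty+\|\chi_1^{(k)}\|_\infty)$ over both sides, a constant depending only on the chosen profiles and hence independent of $g$ and $\eps$.

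The one genuine subtlety — the part I would treat with care rather than a calculation — is arranging that the profiles can be chosen $\eps$-uniformly while keeping their supports confined to $[\eps,\infty)$ and $(-\infty,-\eps]$ respectively, so that the constant $C$ truly does not degenerate as $\eps\to 0$. The clean way is to \emph{not} rescale: each profile should be a fixed bump that switches on over a unit-length interval abutting the point $x=\pm\eps$ on the far side, with its derivatives bounded by absolute constants regardless of $\eps$. Because we only need pointwise bounds on the jump-corrections (not, say, bounds localized to a shrinking neighbourhood), no $\eps$-dependent scaling is required, and the $W^2_{2,loc}$ membership is a purely local matching statement at the two points. I expect the main obstacle to be purely bookkeeping: verifying that the prescribed value- and derivative-jumps of the fixed profiles are exactly $(1,0)$ and $(0,1)$ as claimed, and that the two single-point corrections do not interfere (which is immediate from disjoint supports once $\eps>0$). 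Everything else is a direct consequence of linearity in the jump data.
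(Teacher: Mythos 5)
Your construction is correct and is essentially identical to the paper's: the author likewise fixes two $\eps$-independent profiles $w_0,w_1$ (compactly supported in $[0,\infty)$, with unit value-jump resp.\ unit derivative-jump at $0$), translates/reflects them to $x=\pm\eps$ on the far side of $(-\eps,\eps)$, and takes the same linear combination with coefficients $\pm[g]_{\pm\eps}$, $\pm[g']_{\pm\eps}$, reading off \eqref{REst} from the boundedness of the fixed profiles. Just make sure your profiles are compactly supported (not merely supported in a half-line) so that $\rho$ itself has compact support as the proposition asserts.
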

\begin{proof}
Let us introduce functions $w_0$ and $w_1$ that are smooth outside the origin, have compact supports contained in $[0,\infty)$, and such that $w_0(+0)=1$, $w_0'(+0)=0$, $w_1(+0)=0$ and $w_1'(+0)=1$.
We set
\begin{equation*}%\label{CorrectoR}
\rho(x)=[g]_{-\eps}\, w_0(-x-\eps)-[g']_{-\eps}\,w_1(-x-\eps)\\
-[g]_{\eps}\,w_0(x-\eps)-[g']_{\eps}\,w_1(x-\eps).
\end{equation*}
By construction,  $\rho$ has a compact support and vanishes in $(-\eps,\eps)$. An easy computation also shows that
\begin{equation*}
  [\rho]_{-\eps}=-[g]_{-\eps}, \quad[\rho]_\eps=-[g]_\eps, \quad[\rho']_{-\eps}=-[g']_{-\eps}, \quad [\rho']_\eps=-[g']_\eps.
\end{equation*}
Therefore $g+\rho$ is continuous on~$\Real$ along with the first derivative and consequently belongs to $W_{2, loc}^2(\Real)$.
Finally, the explicit formula for $\rho$  makes it obvious that inequality \eqref{REst} holds.
\end{proof}

\section{Proof of Theorem \ref{MainTh}}
Given $f\in L_2(\Real)$ and $\zeta\in \mathcal{C}\setminus\Real$, we must compare two elements $ u_\eps=(S_\eps-\zeta)^{-1}f$ and $u=(S_{\alpha\beta}-\zeta)^{-1}f$, and show that the difference $u_\eps-u$ is infinitely small in $L_2(\Real)$, as $\eps\to 0$, uniformly on $f$. The basic idea of the proof is to construct a~suitable approximation to $u_\eps$.
For $\eps>0$, we introduce the sequence of functions
\begin{equation}\label{AsymptoticsUeps}
 y_\eps(x)=
  \begin{cases}
       u(x)   & \text{if }|x|>\eps,\\
      \psi\xe+\eps v_\eps\xe& \text{if }  |x|<\eps,
 \end{cases}
\end{equation}
where
$\psi(t)=u(-0)+\kappa^{-1}(u(+0)-u(-0))\,\omega(t)$ is a restriction of a half-bound state of operator $B$ such that
\begin{equation}\label{PsiAt1}
\psi(-1)=u(-0), \qquad \psi(1)=u(+0);
\end{equation}
function $v_\eps$ solves  the  problem
\begin{align}\label{bvpV1Eq}
  -&v_\eps'' +(\phi_2, v_\eps)\,\phi_1 +(\phi_1, v_\eps)\,\phi_2=
\eps f(\eps t) -q \psi(t),\quad t\in\cI, \\\label{bvpV1Cnds}
 &v_\eps'(-1)=u'(-0)+\xi_\eps(f), \quad v_\eps'(1)=u'(+0)+\eta_\eps(f)
\end{align}
with some numbers $\xi_\eps$ and $\eta_\eps$ depending on $f$.

First we record some estimates on $u$ and $\psi$. We  observe that $(S_{\alpha\beta}-\zeta)^{-1}$ is a bounded operator from~$L_2(\Real)$ to $\dom S_{\alpha\beta}$ equipped with the
graph norm. Since  potential $V$ is locally bounded, the latter space is a subspace of $W_{2,loc}^2(\Real\setminus\{0\}) \cap\mathcal{V}$. Hence there exists a constant independent of $f$ such that
\begin{equation*}
\|u\|_{W_2^2((-r,r)\setminus \{0\})}\leq c\|f\|,
\end{equation*}
for any $r>0$, and thus $\|u\|_{C^1([-r,0])}+\|u\|_{C^1([0,r])}\leq c\|f\|$, by the Sobolev embedding theorem. In particular, we  have
\begin{equation*}
|u(-0)|+|u(+0)|+|u'(-0)|+|u'(+0)|\leq c\|f\|.
\end{equation*}
It follows from the last bound that
\begin{equation}\label{EstPsi}
  \|\psi\|_{L_2(\cI)}\leq c_1\big(|u(-0)|+|u(+0)|\big)
  \leq c_2 \|f\|.
\end{equation}
Next, there exists a constant  being independent of $\eps$ and $u$ such that
\begin{equation}\label{EstU(Eps)-U(0)}
\big|u^{(k)}(-\eps)-u^{(k)}(-0)\big|+
\big|u^{(k)}(\eps)-u^{(k)}(+0)\bigl|\leq C\eps^{1/2}\|f\|
\end{equation}
for $k=0,1$, since
\begin{equation*}
\bigr|u^{(k)}(\pm\eps)-u^{(k)}(\pm 0)\bigl|\leq \left|\int_0^{\pm\eps}|u^{(k+1)}(x)|\,dx\right|
   \leq c\eps^{1/2} \|u\|_{W_2^2((-1,1)\setminus \{0\})}.
\end{equation*}

In view of Proposition~\ref{LemmaNHbvp}, for each $f\in L_2(\Real)$ there exists a unique pair $(\xi_\eps,\eta_\eps)$ such that
problem \eqref{bvpV1Eq}, \eqref{bvpV1Cnds} admits a solution.
We conclude from \eqref{ABonH} that
\begin{equation}\label{FunctionalsAB}
\begin{aligned}
&\xi_\eps(f) =\big(\kappa^{-1}\omega -1,q\psi\big)-u'(-0)+\eps\big(1-\kappa^{-1}\omega, f(\eps \,\cdot) \big), \\
     &\eta_\eps(f)=\kappa^{-1}\big(q\omega, \psi\big)-u'(+0)-\eps \kappa^{-1}\big(\omega,f(\eps \,\cdot)\big).
\end{aligned}
\end{equation}
Then \eqref{bvpV1Eq}, \eqref{bvpV1Cnds} has a solutions $v_\eps$ such that  $v_\eps(-1)=0$, $v_\eps(1)=0$ and
\begin{equation}\label{EstVeps}
  \|v_\eps\|_{W_2^2(\cI)}\leq
  c_1 \left(\|\psi\|_{L_2(\cI)}+\eps \|f(\eps\, \cdot)\|_{L_2(\cI)}\right)\leq c_2 \|f\|,
\end{equation}
by \eqref{EstV}. Here we employed \eqref{EstPsi} and the obvious inequality
\begin{equation}\label{EstF(eps)}
 \|f(\eps\,\cdot\,)\|_{L_2(\cI)}\leq c\eps^{-1/2} \|f\|.
\end{equation}

Function~$y_\eps$ given by \eqref{AsymptoticsUeps} does not
belong to the domain of $S_\eps$, because it is in general discontinuous at  points $x=-\eps$ and $x=\eps$.  Although $y_\eps$ has points of  discontinuity, we will show that its jumps  and  jumps of its first derivative at these points are small as $\eps\to 0$.
Recalling \eqref{PsiAt1}, boundary conditions \eqref{NeumanProblem} and \eqref{bvpV1Cnds}  we see at once that
\begin{equation}\label{JumpsOfYeps}
\begin{aligned}
  &[y_\eps]_{-\eps}=u(-0)-u(-\eps),&&
  [y_\eps']_{-\eps}=u'(-0)-u'(-\eps)+\xi_\eps(f),
  \\
  & [y_\eps]_{\eps}=u(\eps)-u(+0),&&
  [y_\eps']_{\eps} =u'(\eps)-u'(+0)- \eta_\eps(f).
\end{aligned}
\end{equation}

The following lemma is the second key point of the proof.
From the technical point of view, it states that the jumps of $y_\eps$ and $y'_\eps$ are small only for $q$ satisfying condition \eqref{Qconds} and $\alpha$, $\beta$ given by \eqref{AlphaBeta}.
But in essence, the lemma demonstrates  a subtle connection between the half-bound states of $B$, potential $q$ and point interactions \eqref{GeneralizedDI}.
\begin{lem}
Suppose that $u=(S_{\alpha\beta}-\zeta)^{-1}f$ with $\alpha$ and $\beta$ given by \eqref{AlphaBeta}. Under  the assumptions  of  Theorem~\ref{MainTh}, sequences $\xi_\eps(f)$ and $\eta_\eps(f)$ are infinitesimal as $\eps\to 0$ and the estimate
\begin{equation}\label{EstAB}
   |\xi_\eps(f)|+|\eta_\eps(f)|\leq C\eps^{1/2}\|f\|
\end{equation}
holds for all $f\in L_2(\Real)$ and  a constant $C$ which does not depend on $f$.
\end{lem}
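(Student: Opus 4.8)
The plan is to split each of the two functionals in \eqref{FunctionalsAB} into a genuinely $\eps$-dependent remainder, built from $f(\eps\,\cdot)$, and an $\eps$-independent part built only from the boundary data of $u$ and the moments $a_0,a_1,a_2$. The remainder is harmless: since $\omega$ and $1-\kappa^{-1}\omega$ are bounded on $\cI$, the Cauchy--Schwarz inequality together with \eqref{EstF(eps)} gives
\begin{equation*}
  \eps\,\big|\big(1-\kappa^{-1}\omega,\,f(\eps\,\cdot)\big)\big|
  +\eps\,\big|\kappa^{-1}\big(\omega,\,f(\eps\,\cdot)\big)\big|
  \leq C\eps\,\|f(\eps\,\cdot)\|_{L_2(\cI)}\leq C'\eps^{1/2}\|f\|,
\end{equation*}
which is already of the order claimed in \eqref{EstAB}. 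So both the estimate and the infinitesimality of $\xi_\eps,\eta_\eps$ reduce entirely to showing that the $\eps$-independent parts
\begin{equation*}
  \big(\kappa^{-1}\omega-1,\,q\psi\big)-u'(-0)\qquad\text{and}\qquad \kappa^{-1}\big(q\omega,\psi\big)-u'(+0)
\end{equation*}
vanish \emph{identically}. This is forced anyway: these parts do not depend on $\eps$, so if either were a nonzero constant the bound \eqref{EstAB} could not hold.

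Next I would evaluate them using the explicit form $\psi=u(-0)+\kappa^{-1}(u(+0)-u(-0))\,\omega$ and the definitions \eqref{NotationAk} of $a_0,a_1,a_2$ (recall $\supp q\subset\cI$, so all integrals may be taken over $\cI$). Writing $\gamma=\kappa^{-1}(u(+0)-u(-0))$, the elementary relations $(q,\psi)=a_0\,u(-0)+a_1\,\gamma$ and $(q\omega,\psi)=a_1\,u(-0)+a_2\,\gamma$ turn the two parts into the linear forms
\begin{gather*}
  (\kappa^{-1}a_1-a_0)\,u(-0)+(\kappa^{-1}a_2-a_1)\,\gamma-u'(-0),\\
  \kappa^{-1}\big(a_1\,u(-0)+a_2\,\gamma\big)-u'(+0).
\end{gather*}

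The key step is then to insert the interface conditions defining $\dom S_{\alpha\beta}$, namely $u(+0)=\alpha u(-0)+\beta u'(-0)$ and $u'(+0)=\alpha^{-1}u'(-0)$, so that $\gamma=\kappa^{-1}\big((\alpha-1)u(-0)+\beta u'(-0)\big)$, and to collect the coefficients of the two independent quantities $u(-0)$ and $u'(-0)$. This yields four scalar identities, and I expect verifying them to be the substantive content of the lemma. Three of the four follow directly from \eqref{AlphaBeta}: the coefficient of $u'(-0)$ in the first form vanishes because $\beta=\kappa^2/(a_2-\kappa a_1)$; the coefficient of $u(-0)$ in the second form vanishes because $\alpha=(a_2-\kappa a_1)/a_2$; and the coefficient of $u'(-0)$ in the second form collapses to $a_2/(a_2-\kappa a_1)-\alpha^{-1}=0$ once both values are substituted. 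The only coefficient that genuinely invokes hypothesis \textit{(ii)} is that of $u(-0)$ in the first form: after using the value of $\alpha$ it reduces to $a_1^2/a_2-a_0$, which is zero precisely by the constraint $a_0a_2=a_1^2$ in \eqref{Qconds}. The condition $a_2\neq\kappa a_1$ serves only to keep $\alpha$ and $\beta$ in \eqref{AlphaBeta} well defined.

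Thus all four coefficients vanish, the $\eps$-independent parts drop out, and $\xi_\eps,\eta_\eps$ coincide with their $O(\eps^{1/2}\|f\|)$ remainders, giving both \eqref{EstAB} and the infinitesimality. The real difficulty here is not analytic but structural: one must reduce both functionals to the common boundary data $(u(-0),u'(-0))$ through the interface conditions and then check that the four resulting cancellations occur simultaneously — which is exactly where the orthogonality normalization \textit{(i)} (entering through $\kappa$ and $\omega$), the moment constraint \textit{(ii)}, and the precise choice \eqref{AlphaBeta} of $\alpha,\beta$ are all used at once. Once this coefficient matching is carried out, the bound is immediate.
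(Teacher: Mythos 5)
Your proposal is correct and follows essentially the same route as the paper: split $\xi_\eps,\eta_\eps$ into the $O(\eps^{1/2}\|f\|)$ remainders coming from $f(\eps\,\cdot)$ and the $\eps$-independent parts, then show the latter vanish by inserting the interface conditions and the formulas \eqref{AlphaBeta} together with $a_0a_2=a_1^2$. The only (mild, and arguably cleaner) difference is organizational: by matching the four coefficients of $u(-0)$ and $u'(-0)$ directly you avoid the paper's separate treatment of the cases $a_1\neq 0$ and $a_1=0$, since you never need the rewritten form $\alpha=(a_1-\kappa a_0)/a_1$ that requires dividing by $a_1$.
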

\begin{proof}
We will show that the terms in \eqref{FunctionalsAB}, which do not depend on $\eps$, are equal to zero, i.e.,
\begin{equation}\label{ZeroTerms}
\big(\kappa^{-1}\omega -1,q\psi\big)-u'(-0)=0, \quad \kappa^{-1}\big(q\omega, \psi\big)-u'(+0)=0.
\end{equation}
Recall that $u$ is a unique solution of the problem
\begin{align}\label{LimitProblemEq}
  -&u''+(V-\zeta)u= f \; \text{in } \Real\setminus\{0\},\\\label{LimitProblemCnds}
   &u(+0)=\alpha u(-0)+\beta u'(-0), \quad u'(+0)=\alpha^{-1} u'(-0).
\end{align}
Then half-bound state $\psi$ in \eqref{AsymptoticsUeps} can be written as
\begin{equation}\label{PsiAnotherForm}
  \psi(t)=u(-0)+\kappa^{-1}\Big((\alpha-1) u(-0)+\beta u'(-0)\Big)\,\omega(t).
\end{equation}
First, we consider the case when constant $a_1$ in \eqref{Qconds} is different from zero. Then we also have $a_0\neq0$ and $a_2\neq0$. With this, we obtain
\begin{multline*}
  \big(q,\psi\big)=\Big(q,\,u(-0)+\kappa^{-1}\big((\alpha-1) u(-0)+\beta u'(-0)\big)\,\omega\Big)
  \\=
  a_0u(-0)+\kappa^{-1}a_1\big((\alpha-1) u(-0)+\beta u'(-0)\big)
 \\=
\left(\kappa^{-1}a_1(\alpha-1)+a_0\right)u(-0)+\kappa^{-1}a_1\beta u'(-0)
 \\=
 \frac{a_1}{\kappa}\left(\alpha-\frac{a_1-\kappa a_0}{a_1}\right)u(-0)+\frac{a_1\beta}{\kappa}\, u'(-0)=\frac{a_1\beta}{\kappa}\, u'(-0),
\end{multline*}
because
\begin{equation*}
  \alpha=\frac{a_2-\kappa a_1}{a_2}=\frac{a_0a_2-\kappa a_0 a_1}{a_0a_2}=\frac{a_1-\kappa a_0}{a_1}
\end{equation*}
in view of identity $a_0a_2=a_1^2$.
In the same manner, we deduce
\begin{multline*}
  \big(q\omega,\psi\big)=
  a_1u(-0)+\kappa^{-1}a_2\big((\alpha-1) u(-0)+\beta u'(-0)\big)
  \\=
 \frac{a_2}{\kappa}\left(\alpha-\frac{a_2-\kappa a_1}{a_2}\right)u(-0)+\frac{a_2\beta}{\kappa}\, u'(-0)=\frac{a_2\beta}{\kappa}\, u'(-0),
\end{multline*}
by the choice of $\alpha$ in \eqref{AlphaBeta}.
By the above, we have
\begin{multline*}
  \big(\kappa^{-1}\omega -1,q\psi\big)-u'(-0)
  =\kappa^{-1}\big(q\omega,\psi\big)-\big(q, \psi\big)-u'(-0)\\
    =\left(\frac{a_2\beta}{\kappa^2}-\frac{a_1\beta}{\kappa} -1\right)u'(-0)
    =\Big(\beta\cdot\frac{a_2-\kappa a_1}{\kappa^2}-1\Big)u'(-0)=0,
\end{multline*}
by the choice of $\beta$. Since $\alpha^{-1}=\kappa^{-2} a_2\beta$, we find
\begin{equation*}
  \kappa^{-1}\big(q\omega, \psi\big)-u'(+0)
  =
  \kappa^{-2} a_2\beta\, u'(-0)-u'(+0) = \alpha^{-1} u'(-0)-u'(+0)
 =0,
\end{equation*}
by the second boundary condition in \eqref{LimitProblemCnds}. Note that the first condition  \eqref{LimitProblemCnds}  is already used in \eqref{PsiAnotherForm}.
Therefore identities \eqref{ZeroTerms} hold and
\begin{equation}\label{ABshortform}
  \xi_\eps(f) =\eps\big(1-\kappa^{-1}\omega, f(\eps \,\cdot)\big),\qquad
  \eta_\eps(f)=
 -\eps \kappa^{-1}\big(\omega,f(\eps \,\cdot)\big).
\end{equation}
Finally then, from the last formulae and inequality \eqref{EstF(eps)} we immediately deduce estimate \eqref{EstAB}.

Now we consider the case $a_1=0$ which corresponds to the classic $\delta'$-interaction with $\alpha=1$ and $\beta=\kappa^{2}{a_2}^{-1}$.
Consequently \eqref{Qconds} implies $a_0=0$. Also,  \eqref{PsiAnotherForm} reduces to $\psi(t)=u(-0)+\kappa^{-1}\beta u'(0)\,\omega(t)$,
since $u'(-0)=u'(+0)=u'(0)$. A direct calculation shows that
$\big(q,\psi\big)=0$ and $\big(q\omega,\psi\big)=\kappa^{-1}a_2\beta\, u'(0)$. Therefore
\begin{align*}
&\begin{aligned}
\xi_\eps(f)&=\eps\big(1-\kappa^{-1}\omega, f(\eps \,\cdot)\big)+\kappa^{-1}\big(q\omega, \psi\big)-u'(0)\\
&=
  \eps\big(1-\kappa^{-1}\omega, f(\eps \,\cdot)\big)+\kappa^{-2}a_2\beta\, u'(0)-u'(0)=\eps\big(1-\kappa^{-1}\omega, f(\eps \,\cdot)\big),
\end{aligned}\\
&\begin{aligned}
   \eta_\eps(f)&=-\eps \kappa^{-1}\big(\omega,f(\eps \,\cdot)\big)+\kappa^{-1}\big(q\omega, \psi\big)-u'(0)\\
  &=
 -\eps \kappa^{-1}\big(\omega,f(\eps \,\cdot)\big)+
 \kappa^{-2} a_2\beta\, u'(0)-u'(0)=
 -\eps \kappa^{-1}\big(\omega,f(\eps \,\cdot)\big),
\end{aligned}
\end{align*}
and  $\xi_\eps(f)$ and $\eta_\eps(f)$ also have  the form \eqref{ABshortform}, which completes the proof.
\end{proof}

Returning now to the jumps \eqref{JumpsOfYeps}, we see at once that
\begin{equation*}
  \big|[y_\eps]_{-\eps}\big|+\big|[y_\eps]_{\eps}\big|
  +\big|[y_\eps']_{-\eps}\big|+\big|[y_\eps']_{\eps}\big|
  \leq C\eps^{1/2}\|f\|,
\end{equation*}
by \eqref{EstU(Eps)-U(0)} and \eqref{EstAB}.
Owing to Proposition~\ref{PropW22Corrector} there exists a corrector $\rho_\eps$ such that $Y_\eps=y_\eps+\rho_\eps$ belongs to $W_{2,loc}^2(\Real)$. Moreover, $\rho_\eps$ has a compact support, $\rho_\eps(x)=0$ for $x\in(-\eps,\eps)$, and
\begin{equation}\label{EstRhoEps}
  |\rho_\eps(x)|+|\rho_\eps''(x)|\leq C\eps^{1/2}\|f\|\quad \text{ for } |x|\geq \eps.
\end{equation}

Since $Y_\eps$ belongs to the domain of $S_\eps$, we now can compute $F_\eps=(S_\eps-\zeta)Y_\eps$ in order to estimate the accuracy of approximation.
From \eqref{LimitProblemEq} it follows  that
\begin{equation*}
  F_\eps(x)=\big( -\tfrac{d^2}{dx^2}+(V(x)-\zeta)\big)\big(u(x)+\rho_\eps(x)\big)
  =f(x)-\rho_\eps''(x)+(V(x)-\zeta)\rho_\eps(x)
\end{equation*}
for all $x$ such that $|x|>\eps$.  In the case $|x|<\eps$, we have
\begin{equation*}
\begin{aligned}
   F_\eps&(x)=
     -\frac{d^2}{dx^2}\big(Y_\eps\xe\big)+(V(x)-\zeta) Y_\eps\xe\\
     &+\eps^{-3}\int\limits_{-\eps}^\eps \Big(
     \phi_1\xe\phi_2\se+\phi_2\xe\phi_1\se
     \Big)Y_\eps\se\,ds+\eps^{-1}q\xe Y_\eps\xe\\
     &= \eps^{-2} \Big(-\psi''\xe+( \phi_2,\psi)\, \phi_1\xe+( \phi_1, \psi) \,\phi_2\xe \Big)\\
     & +\eps^{-1} \Big(-v_\eps''\xe+( \phi_2, v_\eps )\, \phi_1\xe+( \phi_1, v_\eps) \,\phi_2\xe+ q\xe \psi\xe\Big)\\
     & +q\xe v_\eps\xe+(V(x)-\zeta)Y_\eps\xe
     =f(x)+q\xe v_\eps\xe+(V(x)-\zeta) Y_\eps\xe,
\end{aligned}
\end{equation*}
by \eqref{NeumanProblem} and \eqref{bvpV1Eq}.
Therefore  $(S_\eps-\zeta)Y_\eps=f+r_\eps$, where
\begin{equation*}
  r_\eps(x)=
      \begin{cases}
        -\rho_\eps''(x)+(V(x)-\zeta)\rho_\eps(x) & \text{if } |x|>\eps,\\
        q\xe v_\eps\xe+(V(x)-\zeta) Y_\eps\xe & \text{if } |x|<\eps.
      \end{cases}
\end{equation*}
For any $g\in L_2(\cI)$ we have
$\|g(\eps^{-1}\cdot)\|_{L_2(-\eps,\eps)}=\eps^{1/2}\|g\|_{L_2(\cI)}$.
Using this equality together with the facts that $V$ is local bounded, $\rho_\eps$ has a compact support, and $Y_\eps=y_\eps$ on $(-\eps,\eps)$, we obtain
\begin{multline*}
  \|r_\eps\|\leq  c_1 \big(\|\rho_\eps''+(\zeta-V) \rho_\eps\|+\|q(\eps^{-1}\cdot) v_\eps(\eps^{-1}\cdot)+(V-\zeta) Y_\eps(\eps^{-1}\cdot)\|_{L_2(-\eps,\eps)}\big)\\
  \leq c_2\max\limits_{|x|>\eps}(|\rho_\eps|+|\rho_\eps''|)+c_3 \eps^{1/2}\big(\|v_\eps\|_{L_2(\cI)}+\|y_\eps\|_{L_2(\cI)}\big)\\
  \leq c_2\max\limits_{|x|>\eps}(|\rho_\eps|+|\rho_\eps''|)
  +c_4\eps^{1/2}\big(\|\psi\|_{L_2(\cI)}+\|v_\eps\|_{L_2(\cI)}\big).
\end{multline*}
Combining  estimates \eqref{EstPsi}, \eqref{EstVeps} and \eqref{EstRhoEps} yields the bound
\begin{equation}\label{EstReps}
  \|r_\eps\|\leq c\eps^{1/2}\|f\|.
\end{equation}
We can also apply the similar considerations to the difference
\begin{equation*}
  Y_\eps(x)-u(x)=
  \begin{cases}
       \rho_\eps(x)   & \text{if }|x|>\eps,\\
      \psi\xe+\eps v_\eps\xe-u(x)& \text{if }  |x|<\eps
 \end{cases}
\end{equation*}
and obtain the estimate
\begin{equation}\label{EstPeps}
   \|Y_\eps-u\|\leq c\eps^{1/2}\|f\|.
\end{equation}
The last bound means that a non-zero contribution in the $L_2$-norm of $Y_\eps$, as $\eps\to 0$, is produced by  $u=(S_{\alpha\beta}-\zeta)^{-1}f$ only.
Next, from $(S_\eps-\zeta)Y_\eps=f+r_\eps$ we  have
\begin{equation*}
  (S_\eps-\zeta)^{-1}f=Y_\eps-(S_\eps-\zeta)^{-1}r_\eps.
\end{equation*}
Finally we conclude that
\begin{multline*}
    \|(S_\eps-\zeta)^{-1}f-(S_{\alpha\beta}-\zeta)^{-1}f\|
    =\|Y_\eps-u-(S_\eps-\zeta)^{-1}r_\eps\|
      \\ \leq\|Y_\eps-u\|+ \|(S_\eps-\zeta)^{-1}r_\eps\|
    \leq\|Y_\eps-u\|+ |\Im\zeta|^{-1}\|r_\eps\|  \leq C \eps^{1/2}\|f\|,
\end{multline*}
by  \eqref{EstReps} and \eqref{EstPeps}. The last bound establishes
the norm resolvent convergence of  $S_\eps$ to  operator $S_{\alpha\beta}$ with $\alpha$ and $\beta$  given by \eqref{AlphaBeta}, which is the desired conclusion.

% ----------------------------------------------------------------
\bibliographystyle{amsplain}

\begin{thebibliography}{10}
%01
\bibitem{Albeverio2edition}
   S. Albeverio, F. Gesztesy,  R. H{\o}egh-Krohn and H. Holden,  Solvable    Models in Quantum Mechanics, Springer, Heidelberg 1998.
%02
\bibitem{AlbeverioKurasov}
   S.  Albeverio and P. Kurasov,
     Singular Perturbations of Differential Operators.
         Solvable Schr\"{o}dinger Type Operators,
    (London Mathematical Society Lecture Note Series vol 271)
    (Cambridge: Cambridge University Press), 1999, p~429.

%03
\bibitem{AlbeverioKoshmanenkoKurasovNizhnik2002}
    S. Albeverio, V. Koshmanenko, P. Kurasov and L. Nizhnik,
   \textit{ On approximations of rank one $\mathcal{H}_{-2}$-perturbations,}
    Proceedings of the American Mathematical Society
    Vol. 131, No. 5 (2003), pp. 1443--1452.

%04

\bibitem{AlbeverioNizhnik2000}
S. Albeverio and L. Nizhnik, \textit{Approximation of general zero-range potentials.} Ukrainian Mathematical Journal, Vol. 52, No. 5, 2000.

\bibitem{AlbeverioNizhnik2006}
    S. Albeverio and L. Nizhnik, \textit{ A Schr\"{o}dinger operator with a $\delta'$ interaction on a Cantor set and Krein–Feller operators.} Mathematische Nachrichten 279(5--6), 467--476 (2006).

\bibitem{AlbeverioNizhnik2007}
     S. Albeverio and L. Nizhnik, \textit{ Schr\"{o}dinger operators with nonlocal point interactions.}
     J. Math. Anal. Appl. 332 884--895 (2007).

\bibitem{AlbeverioNizhnik2013}
    S. Albeverio and L. Nizhnik,  \textit{Schr\"{o}dinger operators with nonlocal potentials.}
    Methods Funct. Anal. Topology. 19(3) 199--210 (2013).

\bibitem{AlbeverioFassariRinaldi2013}
    S. Albeverio, S. Fassari, F. Rinaldi, \textit{A remarkable spectral feature of the Schr\rd{\"o}dinger Hamilto\-nian of the harmonic oscillator perturbed by an attractive $\delta'$-interaction centred at the origin: double degeneracy and level crossing.}
    Journal of Physics A: Mathematical and Theoretical, 46(38), 385305 (2013).


\bibitem{AlbeverioFassariRinaldi2015}
  S. Albeverio, S. Fassari, F. Rinaldi, \textit{The Hamiltonian of the harmonic oscillator with an attractive $\delta'$-interaction centred at the origin as approximated by the one with a triple of attractive $\delta$-interactions.} Journal of Physics A: Mathematical and Theoretical 49.2 (2015): 025302.



\bibitem{CheonShigehara1998}
    T. Cheon and T. Shigehara, \textit{Realizing discontinuous wave functions with renormalized short-range potentials.} Physics Letters A 243.3 (1998): 111--116.

\bibitem{CheonExner2004}
   T. Cheon and P. Exner. \textit{An approximation to $\delta'$ couplings on graphs.} Journal of Physics A: Mathematical and General 37.29 (2004): L329.

\bibitem{ChristianZolotarIermak03}
     P.~L.~Christiansen,  H.~C.~Arnbak,  A.~V.~Zolotaryuk,  V.~N.~Ermakov, Y.~B.~Gaididei,
     \textit{On the existence of resonances in the transmission probability for interactions  arising from derivatives of Dirac’s delta function.}
     J. Phys. A: Math. Gen. {\bf 36} (2003),     7589--7600.



\bibitem{ExnerManko2014}
    P. Exner and S. Manko,
    \textit{Approximations of quantum-graph vertex couplings by singularly scaled rank-one operators.}
    Letters in Mathematical Physics 104.9 (2014): 1079--1094.

%07
\bibitem{KlauderPhenomenon:2001}
  P. Exner, H. Neidhardt, V. A. Zagrebnov,
  \textit{Potential approximations to $\delta'$: an inverse Klauder phenomenon with norm-resolvent convergence.}
  Commun. Math. Phys. 224, 593--612 (2001).

%08
\bibitem{FassariRinaldi2009}
    S. Fassari and F. Rinaldi,
     \textit{On the spectrum of the Schr\"{o}dinger Hamiltonian with a particular configuration of three one-dimensional point interactions.}
     Reports on Mathematical Physics 64.3 (2009): 367--393.


\bibitem{GadellaNegroNietoPL2009}
    Gadella M., Negro J., Nieto L. M.,
    \textit{Bound states and scattering coefficients of the $-a\delta(x)+b\delta'(x)$ potential},
    Phys. Lett. A \textbf{373} (2009), no.~15, 1310--1313.


\bibitem{GadellaGlasserNieto2011}
    M. Gadella, M. L. Glasser, and L. M. Nieto,
    \textit{One dimensional models with a singular potential of the type $-\alpha\delta'(x)+ \beta\delta′(x)$.}
    International Journal of Theoretical Physics 50.7 (2011): 2144--2152.


\bibitem{GolovatyMankoUMB}
    Yu. D. Golovaty, S. S. Man'ko,
    \textit{Solvable models for the Schr\"{o}dinger operators with $\delta'$-like potentials,}
    Ukr. Math. Bull. 6 (2) (2009) 169--203 (arXiv:0909.1034v1 [math.SP]).


\bibitem{GolovatyHrynivJPA2010}
    Yu. D. Golovaty,  R. O. Hryniv,
    \textit{On norm resolvent convergence of Schr\"{o}dinger
    operators with $\delta'$-like potentials.}
    J. Phys. A: Math. Theor. 43 (2010) 155204 (14pp) (A Corrigendum: 2011 J. Phys. A: Math. Theor. 44 049802)

\bibitem{GolovatyMFAT2012}
    Yu. Golovaty,
    \textit{Schr\"{o}dinger operators with $(\alpha \delta' +\beta \delta)$-like potentials: norm resolvent convergence and solvable models.}
     Methods Funct. Anal. Topology, Vol. 18, no. 3, 2012, pp. 243--255.

\bibitem{GolovatyIEOT2013}
    Yu. Golovaty,
    \textit{1D Schr\"{o}dinger Operators with Short Range Interactions: Two-Scale Regularization of Distributional Potentials.}
    Integral Equations and Operator Theory, 2013, Volume 75, Issue 3, pp 341--362.

\bibitem{GolovatyHrynivProcEdinburgh2013}
    Yu. D. Golovaty and R. O. Hryniv,
    \textit{Norm resolvent convergence of singularly scaled Schr\"{o}dinger operators and $\delta'$-potentials.}
    Proceedings of the Royal Society of Edinburgh: Section A Mathematics, 143 (2013), pp 791--816.



\bibitem{KuzhelZnojil}
    S. Kuzhel and M. Znojil,
     \textit{Non-self-adjoint Schr\"{o}dinger operators with nonlocal one-point interactions.}
     Banach J. Math. Anal. V. 11, no. 4 (2017), 923--944.


\bibitem{Lange2015}
   R.-J. Lange,
    \textit{Distribution theory for Schr\"{o}dinger’s integral equation.} Journal of Mathematical Physics 56, 122105 (2015).

\bibitem{NizhFAA2003}
    L. P. Nizhnik,
    \textit{A Schr\"odinger operator with $\delta'$-interaction}//
    Funct. Anal. Appl. \textbf{37} (2003),  N~1, 72--74.

\bibitem{NizhFAA2006}
    L. P. Nizhnik,
    \textit{A one-dimensional Schr\"odinger operator with point interactions on Sobolev spaces.}
    Funct. Anal. Appl. \textbf{40} (2006), N~2, 143-147.


\bibitem{SebRMP}
   P. \v{S}eba
    \textit{Some remarks on the $\delta'$-interaction in one dimension.} Rep. Math. Phys. 24 (1986), 111--120.


\bibitem{ToyamaNogami}
    F. Toyama, Y. Nogami,
    \textit{ Transmission-reflection problem with a potential of the form of the derivative of the delta function.}
     J. Phys. A \textbf{40} (2007), F685--F690.



\bibitem{Zolotaryuk08}
    A. V. Zolotaryuk,
    \textit{Two-parametric resonant tunneling across the $\delta'(x)$ potential}.
    Adv. Sci. Lett. {\bf 1} (2008), 187--191.


\bibitem{Zolotaryuks2011}
    A. V. Zolotaryuk and Y. Zolotaryuk,
    \textit{Controlling a resonant transmission across the $\delta'$-potential: the inverse problem. }
    J. Phys. A: Math. Theor. 44 (2011) 375305 (17pp).


\bibitem{ZolotaryukThreeDelta17}
    A. V. Zolotaryuk,
    \textit{Families of one-point interactions resulting from
    the squeezing limit of the sum of two- and
    three-delta-like potentials.}
     Journal of Physics A: Mathematical and Theoretical,  V. 50 (2017), no. 22, p.225303.


\end{thebibliography}

\end{document}